\newtheorem{theorem}{Theorem}[section]
\theoremstyle{definition}
\newtheorem{definition}{Definition}[section]
\newtheorem{remark}{Remark}[section]
\newtheorem{example}{Example}[section]
    \newcommand{\RR}    {\Bbb{R}}
    \newcommand{\NN}    {\Bbb{N}}
        \newcommand{\ep}    {\epsilon}
\title{On merge trees with a given homological sequence}
\author{Nicholas A. Scoville and Dylan Wen}
\address[Nicholas A. Scoville]{Department of Mathematics and Computer Science, Ursinus College, Collegeville PA 19426}
\email{nscoville@ursinus.edu}
\address[Dylan Wen]{Methacton High School, Eagleville, PA}
\email{dylanmwen@gmail.com}
\date{\today}
\keywords{Discrete Morse theory, merge trees, homological sequence}
\subjclass[2020]{ (Primary) 57Q70;  (Secondary) 05C90, 55N31}
\begin{document}
\maketitle
\begin{abstract}
In this paper, we study the induced homological sequence and the induced merge tree of a discrete Morse function on a tree.  A discrete Morse function on a tree gives rise to a sequence of Betti numbers that keep track of the number of components at each critical value.  A discrete Morse function on a tree also gives rise to an induced merge tree which keeps track of component birth, death, and merging information. These topological indicators are similar but neither one contains the information of the other.  We show that given a merge tree and a homological sequence along with some mild conditions on their relationship, there is a discrete Morse function on a tree that induces both the given merge tree and the given homological sequence.

\end{abstract}
\tableofcontents

\section{Introduction}

The goal of this paper is to study the relationship between merge trees and the homological sequence induced by a discrete Morse function.  An induced homological sequence was originally defined by Nicolaescu \cite{N-08},  and it was studied in the discrete case by R. Ayala and several collaborators \cite{A-F-F-V-09, Ayala-11, Ayala-10a, Ayala-10b} as well as \cite{AGORS} and \cite{RandScoville}.  The homological sequence of a discrete Morse function associates the (multi)-sequence of the Betti numbers of each level subcomplex at a critical simplex.

In addition to keeping track of Betti numbers at each critical simplex, one may also keep a detailed account of component information by studying the merge tree associated to a discrete Morse function. Merge trees were first studied in the context of discrete Morse functions in \cite{JohnsonScoville2022}. Since any discrete Morse function gives rise to a natural filtration, a merge tree can be associated to any discrete Morse function on a fixed graph. The authors studied the realization problem in some special cases, and Br\"uggemann in \cite{Bruggemann2023} proved that any given merge tree can be realized as the induced merge tree of a discrete Morse function on a path.  See section \ref{sec:realization} for details.

We combine these two trackers of topological information by asking: given a merge tree and homological sequence, is there a discrete Morse function on a graph whose induced merge tree is the given merge tree and whose induced homological sequence is the given homological sequence?  With mild restrictions on the homological sequence, we answer in the affirmative in Theorem \ref{thm: main}. The structure of the paper is given as follows.  In section \ref{sec:background}, we give the necessary background in graph theory and discrete Morse theory.  Section \ref{sec:merge} is devoted to defining merge trees induced from a discrete Morse function on a tree. We also review Br\"uggemann's construction \cite{Bruggemann2023} and present it in algorithmic form (Algorithm \ref{Jalg}). Finally, our main theorem is proved in section \ref{sec:merge and homology} along with an example illustrating the construction.

\section{Background}\label{sec:background}

\subsection{Graphs and trees}
Let $G=(V(G),E(G))$ be a finite, loopless graph without multi-edges (i.e. a $1$-dimensional abstract simplicial complex).   We call an edge or a vertex of $G$ a \textbf{simplex}. If $e=uv$ is an edge, we say that the edge $e$ is \textbf{incident} with vertex $v$ and that $u$ and $v$ are \textbf{adjacent}. We use $|V(G)|$ to denote the number of vertices of $G$ and $|E(G)|$ to denote the number of edges of $G$.\\

\noindent A connected graph without any cycles is called a \textbf{tree}.  A disconnected graph $F$ such that each component of $F$ is a tree is called a \textbf{forest}.  For any vertex $v\in F$, we let $F[v]$ denote the connected component of $F$ containing $v$. It immediately follows that if $F$ is a forest with two distinct vertices $u,v\in F$, then there is a path between two vertices $u$ and $v$ if and only if $F[u]=F[v]$.

\subsection{Discrete Morse theory}
Our references for the basics of discrete Morse theory are \cite{Forman-2002,KnudsonBook, Koz20, DMTSco}.  There are several different ways of viewing a discrete Morse function.  For our purposes, we make the following definition:

\begin{definition}\label{discreteMF}  Let $G$ be a graph.  A function $f\colon G \to \RR$ is called  \textbf{weakly increasing} if $f(v)\leq f(e)$ whenever $v \subseteq e$.  A \textbf{discrete Morse function} $f\colon G \to \RR$ is a weakly increasing function which is at most 2--1 and satisfies the property that if $f(\sigma)=f(\tau)$, then either $\sigma\subseteq \tau$ or $\tau\subseteq \sigma$. Any simplex $\sigma$ on which $f$ is 1--1 is called \textbf{critical} and the value $f(\sigma)$ is a \textbf{critical value} of $f$.
\end{definition}

\begin{example}\label{ex: dmf} To illustrate definition \ref{discreteMF}, define the discrete Morse function $f$ on the graph below as follows:
$$
\begin{tikzpicture}[scale=.75]

\node[inner sep=2pt, circle] (7) at (2,2) [draw] {};
\node[inner sep=2pt, circle] (9) at (4,2) [draw] {};
\node[inner sep=2pt, circle] (6) at (0,2) [draw] {};
\node[inner sep=2pt, circle] (1) at (0,4) [draw] {};
\node[inner sep=2pt, circle] (0) at (0,6) [draw] {};
\node[inner sep=2pt, circle] (2) at (-2,4) [draw] {};
\node[inner sep=2pt, circle] (4) at (2,4) [draw] {};

\path[style=semithick] (0) edge node[anchor=east]{\small{$8$}}(1);
\path[style=semithick] (1) edge node[anchor=east]{\small{$5$}}(6);
\path[style=semithick] (2) edge node[anchor=south]{\small{$3$}}(1);
\path[style=semithick] (1) edge node[anchor= south]{\small{$9$}}(4);
\path[style=semithick] (6) edge node[anchor=south]{\small{$6$}}(7);
\path[style=semithick] (7) edge  node[anchor= south]{\small{$7$}}(9);

\node[anchor = south ]  at (0) {\small{$0$}};
\node[anchor = north west]  at (1) {\small{$2$}};
\node[anchor =  east]  at (2) {\small{$3$}};
\node[anchor = west]  at (4) {\small{$9$}};
\node[anchor = north]  at (6) {\small{$4$}};
\node[anchor = north]  at (7) {\small{$6$}};
\node[anchor = north]  at (9) {\small{$7$}};
\end{tikzpicture}
$$

\noindent The critical vertices are $f^{-1}(0), f^{-1}(2),$ and $f^{-1}(4)$ while the critical edges are $f^{-1}(5)$ and $f^{-1}(8)$.

\end{example}

\begin{definition}\label{level} Let $G$ be a graph and $f\colon G\to \RR$ a discrete Morse function.  Given $a \in \mathbb R$ the \textbf{level subcomplex} $G_a$ is defined to be the induced subgraph of $G$ consisting of all simplices $\sigma$ with $f(\sigma) \leq a$.  For each critical value $c_0< \ldots < c_{m-1}$ of $f$, we consider the induced sequence of level subcomplexes $\{v\}=G_{c_0} \subset G_{c_1}\subset \ldots \subset G_{c_{m-1}}$.  In the sequel, we will use the notation $G_{c_i-\ep}$ to denote the level subcomplex immediately preceding $G_{c_i}$; that is, $\ep$ is chosen so that $f(\sigma)<c_i-\ep<c_i$ for every $\sigma\in G$ such that $f(\sigma)<c_i$.
\end{definition}

\begin{definition} Let $f$ be a discrete Morse function with $m$ critical values on a tree $T$.  The \textbf{homological sequence of $f$} is given by
$$B^f_0\colon \{0, 1, \ldots, m-1\}\to
\mathbb{N}\cup \{0\}$$
 defined by $B^f_0(i):=b_0(T(c_i))$ for all $0\leq i\leq m-1$.  If $g\colon T \to \RR$ is another discrete Morse functions $m$ critical values, we say $f$ is \textbf{less than or equal to $g$}, denoted $f\leq g$, if $B_0^f(i)\leq B_0^g(i)$ for every $0\leq i\leq m-1$.
\end{definition}

\begin{example} Continuing with Example \ref{ex: dmf}, the critical values are $0<2<4<5<8$ which induce level subcomplexes $G_0\subseteq G_2\subseteq G_4\subseteq G_5\subseteq G_8$. Considering the sequence of these subgraphs and their Betti numbers, we obtain the homological sequence of $f$ which is shown below:

$$
\begin{array}{c|cccccccc}
i& 0 & 1 & 2 & 3 & 4  \\
%c_i: 0 & 2 & 4 & 5 & 6 & 7\\
\hline
B_0(i)& 1 & 2 & 3 & 2 & 1  \\
\end{array}
$$

\end{example}

\section{Merge trees}\label{sec:merge}

In this section we introduce merge trees, our main object of study.

\subsection{Basics of merge trees}

\begin{definition}\label{defn merge tree} A \textbf{binary tree} is a rooted tree where each node has at most two children, and each child is designated as its \textbf{left} (L) or \textbf{right} (R) child, called the \textbf{chirality} of the node.  A binary tree is \textbf{full} if every node has $0$ or $2$ children. A \textbf{(chiral) merge tree} is a full binary tree.
\end{definition}

We let $c(v)$ denote the chirality of $v$; that is, $c(v)=\mathrm{L}$ or $c(v)=\mathrm{R}$.

\begin{remark}
Although graphs and merge trees are different objects, they both consist of vertices and edges.  To help distinguish them, we reserve the term ``node" for merge trees and ``vertex" for graphs.
\end{remark}

The nodes of a merge tree may be partitioned into nodes of degree 1, called \textbf{leafs}, and nodes of degree 2 or 3, called \textbf{inner nodes}.  The number of leafs in a merge tree $T$ is denoted $\ell(T)$ while the number of inner nodes is denoted $i(T)$.  Note that the \textbf{root}, or unique vertex of degree 2 (denoted $r_M=r$), of a merge tree is considered an inner node.  It is a well known fact in graph theory that $i(T)=\ell(T)+1$.

\begin{example}\label{ex: merge tree running example}
An example of a merge tree $M$ is given below.
$$
\begin{tikzpicture}

\node[inner sep=2pt, circle] (c) at (-.5,2.5) [draw] {};
\node[inner sep=2pt, circle] (b) at (-1.5,3.5) [draw] {};
\node[inner sep=2pt, circle] (p) at (.5,3.5) [draw] {};
\node[inner sep=2pt, circle] (h) at (2,2) [draw] {};
\node[inner sep=2pt, circle] (y) at (3,3) [draw] {};
\node[inner sep=2pt, circle] (f) at (4,4) [draw] {};
\node[inner sep=2pt, circle] (x) at (2,4) [draw] {};
\node[inner sep=2pt, circle] (a) at (1,3) [draw] {};
\node[inner sep=2pt, circle] (t) at (3.5,.5) [draw] {};
\node[inner sep=2pt, circle] (d) at (2.5,1.5) [draw] {};
\node[inner sep=2pt, circle] (s) at (1,1) [draw] {};
\node[inner sep=2pt, circle] (r) at (2.5,-.5) [draw] {};
\node[inner sep=2pt, circle] (z) at (4.5,1.5) [draw] {};

\draw[-]  (c)--(s) node[midway, below] {};
\draw[-]  (s)--(h) node[midway, below] {};
\draw[-]  (s)--(r) node[midway, below] {};
\draw[-]  (t)--(r) node[midway, below] {};
\draw[-]  (t)--(z) node[midway, below] {};
\draw[-]  (t)--(d) node[midway, below] {};
\draw[-]  (h)--(a) node[midway, below] {};
\draw[-]  (h)--(y) node[midway, below] {};
\draw[-]  (y)--(x) node[midway, below] {};
\draw[-]  (y)--(f) node[midway, below] {};
\draw[-]  (c)--(b) node[midway, below] {};
\draw[-]  (c)--(p) node[midway, below] {};

\node[anchor = east]  at (b) {{$b$}};
\node[anchor = east]  at (c) {{$c$}};
\node[anchor = west]  at (h) {{$h$}};
\node[anchor = north]  at (s) {{$s$}};
\node[anchor = north]  at (r) {{$r$}};
\node[anchor = north]  at (t) {{$t$}};
\node[anchor = east]  at (p) {{$p$}};
\node[anchor = east]  at (x) {{$x$}};
\node[anchor = east]  at (y) {{$y$}};
\node[anchor = east]  at (f) {{$f$}};
\node[anchor = west]  at (a) {{$a$}};
\node[anchor = west]  at (d) {{$d$}};
\node[anchor = east]  at (z) {{$z$}};

\end{tikzpicture}
$$
We label each of the nodes with arbitrary names, as we will put a special ordering on the nodes below.  We see that the number of inner nodes is $i(M)=6$ and number of leaves is $\ell(M)=7.$
\end{example}

Given any merge tree $M$, it is not difficult to see that every node $a\in M$ is uniquely determined by the shortest path from the root node $r_M=r$ to $a$.  The \textbf{depth} of $a$ is the length of its shortest path from the root node.  We then associate to node $a$ the path word of $a$ given by the sequence of left/right moves from the root vertex.  Formally, if $r=a_0, a_1, \ldots, a_k=a$ is the sequence of vertices in the unique path from $r$ to $a$, the \textbf{path word of $a$}, denoted $P(a)$, is the sequence $c(a_0)c(a_1)\ldots c(a_{k-1})$.  We will find it convenient to extend the length the word path of a node by considering the empty letter, denoted $\underline{\hspace{8pt}}$.

We put an ordering $\leq$ on the nodes of any merge tree $M$ as follows.  Let $a,b$ be nodes in $M$ with corresponding path words $a_0a_1\ldots a_n$ and $b_0b_1 \ldots b_m$, respectively. Since both paths begin at the root node $r$, $a_0=b_0=c(r)=L$, so that there is a maximal integer $k\leq 1$ such that $a_i=b_i$ for all $i\leq k$.  If $a_k=b_k=L$ ($a_k=b_k=R$), define $a\leq b$ if and only if one of the following holds:
\begin{enumerate}
    \item[(a)] $a_{k+1}=L$ and $b_{k+1}=R$ ($a_{k+1}=R$ and $b_{k+1}=L$)
    \item[(b)] $b_{k+1}=\underline{\hspace{8pt}}$
    \item[(c)] $a=b$.
\end{enumerate}

This ordering on the nodes is called the \textbf{sublevel-connected Morse order} \cite[Definition 4.5]{Bruggemann2023}. We will construct an interleaving of this ordering for the proof of Theorem \ref{thm: main} below.
%We denote these orders with restricted domains by $\leq_{\ell}$ and $\leq_i$, respectively.

\begin{example} The above definition essentially boils down to the idea that the more the path word fluctuates, the larger in the ordering the corresponding node will be.  A few examples will illustrate. Consider the following nodes and their corresponding path words from Example \ref{ex: merge tree running example}:

\begin{eqnarray*}
P(b)&=& LLLL\\
P(a)&=& LLRL\\
P(d)&=& LRL\\
P(r)&=& L.\\
\end{eqnarray*}

Then $b$ is the smallest node in the ordering while $r$ is the largest node.  We furthermore see that since $a$ and $d$ agree at the first value but disagree on the second value, the value that changes corresponds to the larger node.  Hence $a\leq d$.
\end{example}

It can be shown that the order $\leq$ defined above is a total order on the nodes of $M$ in the sense that $v_0\leq v_1\leq \cdots \leq v_{i(M)+\ell(M)-1}$.  This means that we can associate a unique number or labeling $\lambda\colon V(T)\to \{0,1, \ldots, i(M)+\ell(M)-1\}$ to the nodes in $M$ based on their place in the total order.

\begin{example}
We used the order to put the nodes of $M$ in Example \ref{ex: merge tree running example} into a total order.  Now we label each node of $M$ by its value $\lambda(v)$:

$$
\begin{tikzpicture}

\node[inner sep=2pt, circle] (c) at (-.5,2.5) [draw] {};
\node[inner sep=2pt, circle] (b) at (-1.5,3.5) [draw] {};
\node[inner sep=2pt, circle] (p) at (.5,3.5) [draw] {};
\node[inner sep=2pt, circle] (h) at (2,2) [draw] {};
\node[inner sep=2pt, circle] (y) at (3,3) [draw] {};
\node[inner sep=2pt, circle] (f) at (4,4) [draw] {};
\node[inner sep=2pt, circle] (x) at (2,4) [draw] {};
\node[inner sep=2pt, circle] (a) at (1,3) [draw] {};
\node[inner sep=2pt, circle] (t) at (3.5,.5) [draw] {};
\node[inner sep=2pt, circle] (d) at (2.5,1.5) [draw] {};
\node[inner sep=2pt, circle] (s) at (1,1) [draw] {};
\node[inner sep=2pt, circle] (r) at (2.5,-.5) [draw] {};
\node[inner sep=2pt, circle] (z) at (4.5,1.5) [draw] {};

\draw[-]  (c)--(s) node[midway, below] {};
\draw[-]  (s)--(h) node[midway, below] {};
\draw[-]  (s)--(r) node[midway, below] {};
\draw[-]  (t)--(r) node[midway, below] {};
\draw[-]  (t)--(z) node[midway, below] {};
\draw[-]  (t)--(d) node[midway, below] {};
\draw[-]  (h)--(a) node[midway, below] {};
\draw[-]  (h)--(y) node[midway, below] {};
\draw[-]  (y)--(x) node[midway, below] {};
\draw[-]  (y)--(f) node[midway, below] {};
\draw[-]  (c)--(b) node[midway, below] {};
\draw[-]  (c)--(p) node[midway, below] {};

\node[anchor = east]  at (b) {{$0$}};
\node[anchor = east]  at (c) {{$2$}};
\node[anchor = west]  at (h) {{$7$}};
\node[anchor = north]  at (s) {{$8$}};
\node[anchor = north]  at (r) {{$12$}};
\node[anchor = north]  at (t) {{$11$}};
\node[anchor = east]  at (p) {{$1$}};
\node[anchor = east]  at (x) {{$4$}};
\node[anchor = east]  at (y) {{$5$}};
\node[anchor = east]  at (f) {{$3$}};
\node[anchor = west]  at (a) {{$6$}};
\node[anchor = west]  at (d) {{$10$}};
\node[anchor = east]  at (z) {{$9$}};

\end{tikzpicture}
$$
Labeling a merge tree by the values $\lambda(v)$ will be a convenient way to not only read off the ordering of the vertices, but also follow the proof of Theorem \ref{thm: main}.
\end{example}

\subsection{The realization problem}\label{sec:realization}

In this section, we present in an algorithmic form the construction given in \cite{Bruggemann2023} which associates to any merge tree $M$ a path $P$ and a discrete Morse function $f\colon P\to \RR$ such that $M_f=M$.
  This will be used as the basis for our result in Theorem \ref{thm: main} below.

\begin{algorithm}
  \caption{Merge tree algorithm}\label{Jalg}
\verb"Input:" A chiral merge tree $M$ on $n=2k+1>0$ nodes. \\
\verb"Output:" A discrete Morse function $f\colon P \to \mathbb{N}$
whose \\induced merge tree $M_f$ is equal to $M$.

\begin{enumerate}
\item[a)] Order the  nodes $s_0\leq s_1\leq \cdots \leq s_{n-1}$ of $M$ using the  sublevel-connected Morse order $\leq.$
\item[b)] Initialize $j=0$, and $P=\{v_0\}$. Set $j=1$.
\item[c)] If $j=n$, end the algorithm. Otherwise
    \begin{enumerate}
     \item   If $s_j$ is a leaf, define $P=P\cup\{v_j\}$ and define $f(v_j)=j$. Set $j=j+1$ and return to c).
    \item    If $s_j$ is an inner node with children $x$ and $y$, define $P=P\cup\{e_j\}$ where $e_j$ is an edge between a vertex of degree 1 in $P[x]$ and a vertex of degree 1 in $P[y]$. Define $f(e_j)=j$.  Set $j=j+1$ and return to c).
     \end{enumerate}
\end{enumerate}
\end{algorithm}

That this algorithm is correct is proved in \cite[Theorem 5.5]{Bruggemann2023} (though not in algorithmic form). For a given merge tree $M$, we will denote the homological sequence induced by Algorithm \ref{Jalg} by $J_0^M=J_0$.

\section{Merge trees and homological sequences}\label{sec:merge and homology}

This section is devoted to our main result of constructing a discrete Morse function on a graph that induces both a given homological sequence and a given merge tree.  As we will see, not all combinations of homological sequences and merge trees are possible.  We begin with the following compatibility criteria.

\begin{definition} We say that a merge tree $M$ and homological sequence $B_0$ are \textbf{consistent} if the number of nodes of $M$ is equal to the number of elements in the domain of $B_0$.
\end{definition}

Let $M$ be a merge tree on $n$ nodes and $B$ a homological sequence of length $n$. Using the sublevel-connected Morse order $\leq$ on the nodes of $M,$ we construct the \textbf{homological order consistent with $B$} $\leq_B$ inductively over $B(j)$ as follows: Let $\ell, i,$ and $s$ denote a leaf, inner node, and either a leaf or inner node of $M$, respectively.  Write $\ell_0=s_0\leq s_1\leq \cdots \leq s_n=i_n$ for the sublevel-connected Morse order, and let $b_j$ denote the index of $s$ under the homological order consistent with $B$.
\begin{itemize}
    \item For $j=0$, define $s_{b_0}=\ell_0\leq_B s$ for all nodes $s\in M$.
    \item For $j>0$:
    \begin{enumerate}
        \item If $B(j)>B(j-1)$, then choose the smallest leaf node $\ell_k$ not already chosen and set $s_{b_r}\leq_B \ell_{b_j}=\ell_k$ for all $r\leq j.$
        \item If $B(j)<B(j-1)$, then choose the smallest inner node $\i_k$ not already chosen and set $s_{b_r}\leq_B i_{b_j}=\ell_k$ for all $r\leq j.$
    \end{enumerate}
\end{itemize}
It is clear that $\leq_B$ is a total ordering on the nodes of $M$.

\begin{example}
Suppose we are given a merge tree satisfying
$$
\ell_0\leq \ell_1\leq \ell_2\leq i_3\leq i_4\leq  \ell_5\leq \ell_6\leq i_7\leq \ell_8\leq i_9\leq i_{10}.
$$
We will compute two homological orders from $\leq$ based on two different homological sequences.  Consider the two homological sequences we call $A$ and $B$:

$$
\begin{array}{cccccccccccc}
A: 1 & 2 & 3 & 4 & 3 & 4 & 3& 2 & 1& 2 &1\\
B: 1 & 2 & 1 & 2 & 3 & 2 & 1& 2 & 3&2 & 1.\\
\end{array}
$$
The corresponding homological orders are then seen to be
$$
\ell_0\leq_A \ell_1 \leq_A \ell_2 \leq_A \ell_5 \leq_A i_3 \leq_A \ell_6 \leq_A i_4 \leq_A i_7 \leq_A i_9 \leq_A \ell_8 \leq_A i_{10}
$$
and
$$
\ell_0\leq_B \ell_1 \leq_B i_3 \leq_B \ell_2 \leq_B \ell_5 \leq_B i_4 \leq_B i_7 \leq_B\ell_6 \leq_B \ell_8 \leq_B i_9 \leq_B i_{10}.
$$
\end{example}

We now give our main result.H

\begin{theorem}\label{thm: main} Let $M$ be any chiral merge tree on $n=2k+1>0$ nodes.  If $B_0=B$ is any homological sequence of length $n$ such that $J_0^M\leq B$, then there is a graph $G$ and discrete Morse function $f\colon G\to \NN$ such that $M_f=M$ and $B^f=B$.
\end{theorem}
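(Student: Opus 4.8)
The plan is to reuse Br\"uggemann's path directly. Algorithm \ref{Jalg} produces a path $P$ and a discrete Morse function realizing $M$ by inserting the nodes of $M$ in the sublevel-connected Morse order $\le$, using a new vertex for each leaf and a connecting edge for each inner node. I would keep the very same path $P$ (the same vertices and edges, in the same linear arrangement) but re-time the insertions according to the homological order $\le_B$: if $s$ is the node occupying position $j$ in $\le_B$, set $f(s)=j$. Concretely this is Algorithm \ref{Jalg} run with $\le$ replaced by $\le_B$, so that births (leaves) occur exactly at the indices where $B$ increases and merges (inner nodes) exactly where $B$ decreases. Every simplex is critical and $f$ is injective with values in $\{0,\dots,n-1\}$, so $B^f=B$ will hold by construction once I check that $f$ is a legitimate discrete Morse function and that the merge structure is unchanged.

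The crux is an ordering lemma: in $\le_B$ every inner node is preceded by both of its children, hence by all of its descendants. I would prove this by counting. For a prefix of length $j+1$ in $\le_B$, let $I_B(j)$ and $L_B(j)$ be the numbers of inner nodes and leaves it contains; since $B(j)=L_B(j)-I_B(j)$ while $L_B(j)+I_B(j)=j+1$, we get
$$I_B(j)=\frac{(j+1)-B(j)}{2}, \qquad I(j)=\frac{(j+1)-J_0(j)}{2},$$
where $I(j)$ is the analogous count for the sublevel order $\le$. The hypothesis $J_0\le B$ gives $I_B(j)\le I(j)$ for all $j$, i.e.\ $\le_B$ has performed no more merges than $\le$ at every stage. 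Because $\le_B$ lists the inner nodes (and, separately, the leaves) in their original $\le$-order, this inequality forces the $m$-th inner node to appear no earlier in $\le_B$ than in $\le$. Comparing with a child that is a leaf (using $L_B$) or a child that is an earlier inner node (using the preserved relative order) then shows that each child precedes its parent, and transitivity up the tree yields the full descendant claim.

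With the lemma in hand the three required properties follow. The function $f$ is weakly increasing because the two endpoints of the edge $e_i$ attached at an inner node $i$ are leaves lying in the subtree below $i$, hence are inserted before $i$; together with injectivity this makes $f$ a discrete Morse function all of whose simplices are critical. The identity $B^f=B$ is immediate from the birth/merge pattern built into $\le_B$. For $M_f=M$, the key observation is that at the instant $i$ is processed its two child components are precisely the subpaths spanned by the leaf-descendants of its two children: all interior edges of those subtrees have smaller $\le_B$-index (descendants precede ancestors) and no edge joining them to the rest of $P$ has yet appeared (that edge is $e_i$ or lies above $i$). These are exactly the intervals Br\"uggemann's order merges at $i$, so the same pair of components is joined, in the same order of merges and with the same left/right roles; reordering only the births does not change which components merge, nor when they merge relative to other merges, so the induced merge tree is unchanged.

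The main obstacle is the ordering lemma, and this is exactly where $J_0\le B$ is indispensable: without it $\le_B$ could try to merge at an inner node whose children are not yet present, and the construction would fail both as a discrete Morse function and as a realization of $M$. The second point requiring care is the chirality bookkeeping in $M_f=M$ — one must confirm that the component intervals appearing at each merge coincide with those in Algorithm \ref{Jalg}, so that left and right children are assigned as in $M$. The argument above reduces this to the descendant-ordering lemma, but it is the step I would write out most carefully.
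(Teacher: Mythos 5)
Your proposal is correct and is essentially the paper's own construction: the paper's proof (and its Algorithm \ref{MHalg}) is exactly Algorithm \ref{Jalg} run with the sublevel-connected Morse order replaced by the homological order $\leq_B$, adding a new vertex at each increase of $B$ and an edge joining the two children's components at each decrease. If anything, your write-up is more complete than the paper's: your counting lemma ($I_B(j)=\frac{(j+1)-B(j)}{2}\leq I(j)$, forcing every child to precede its parent under $\leq_B$) is precisely where the hypothesis $J_0^M\leq B$ is used and justifies the assertion $x,y\leq_B i_j$ that the paper states without proof, and your component-tracking argument for $M_f=M$ supplies the verification the paper compresses into ``we claim.''
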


\begin{proof} Let $M$ be a chiral merge tree on $n=2k+1>0$ nodes and $B$ a homological sequence of length $n$ satisfying $J_0^M\leq B$. Let $\leq_B$ be the homological order consistent with $B$ on the nodes of $M$. We construct a graph $G$ and discrete Morse function $f\colon G\to \NN$ inductively by inducing on the given homological sequence $B_0(j)$ for $0\leq j\leq n-1$.

For the base case $j=0$, we have $B(0)=1$.  Let $G_0=\{v_0\}$ be a single vertex and define $f(v_0)=0$. Now suppose $0<j\leq n-1$. We consider two cases.

If $B(j)>B(j-1)$, then by construction of $\leq_B$, the simplex with index $j$ is a leaf node $\ell_j$.Let $v_j$ be a vertex not in $G_{j-1}$ and define $G_j=G_{j-1}\cup \{v_j\}$ and $f(v_{j})=j.$

If $B(j)<B(j-1)$, then by construction of $\leq_B$, the simplex with index $j$ is an inner node $i_j$. Let $x$ and $y$ be the two children of $i_j$.  Since $x,y\leq_B i_j$,  the nodes $x$ and $y$ correspond to $s_x$ and $s_y$ (either a vertex or an edge) in the graph $G_{i-1}$ as having been constructed in a previous step. Define $G_i=G_{i-1}\cup\{e_j\}$ where $e_j$ is an edge from any vertex in $G_{i-1}[s_x]$ to any vertex in $G_{i-1}[s_y]$ and $f(i_j)=j$.

Let $G=G_{n-1}$.  We claim that $f\colon G\to \NN$ constructed above is the desired discrete Morse function. Observe that by construction, $B_f=B$.

\end{proof}

Given Theorem \ref{thm: main}, we note that it is easy to adjust Algorithm \ref{Jalg} to account for any homological sequence.

\begin{algorithm}
  \caption{Merge tree and homological sequence algorithm}\label{MHalg}
\verb"Input:" A chiral merge tree $M$ on $n=2k+1>0$ nodes and a homological sequence $B$ compatible with $M$ satisfying $B\geq J_0^M$. \\

\verb"Output:" A discrete Morse function $f\colon P \to \mathbb{N}$
whose induced merge tree $M_f$ is equal to $M$ and whose homological sequence $B^f$ is equal to $B$.

\begin{enumerate}
\item[a)] Order the  nodes $s_0\leq_B s_1\leq_B \cdots \leq_B s_{n-1}$ of $M$ using the  homological order consistent with $B, \leq_B.$
\item[b)] Initialize $j=0$, and $P=\{v_0\}$. Set $j=1$.
\item[c)] If $j=n$, end the algorithm. Otherwise
    \begin{enumerate}
     \item   If $s_j$ is a leaf, define $P=P\cup\{v_j\}$ and define $f(v_j)=j$. Set $j=j+1$ and return to c).
    \item    If $s_j$ is an inner node with children $x$ and $y$, define $P=P\cup\{e_j\}$ where $e_j$ is an edge between a vertex of degree 1 in $P[x]$ and a vertex of degree 1 in $P[y]$. Define $f(e_j)=j$.  Set $j=j+1$ and return to c).
     \end{enumerate}
\end{enumerate}
\end{algorithm}

\begin{example}\label{ex: final}
We give an example of how to construct a discrete Morse function with a given homological sequence and merge tree, following Theorem \ref{thm: main}.

Suppose we are given the homological sequence $B$

$$
\begin{array}{ccccccccccc}
1 & 2 & 3 &2 &3 &4 &3 &2 &3 &2 &1 \\
\end{array}
$$
along with merge tree $M$

\begin{tikzpicture}

\node[inner sep=2pt, circle] (c) at (1,2.5) [draw] {};
\node[inner sep=2pt, circle] (b) at (-0.5,4) [draw] {};
\node[inner sep=2pt, circle] (p) at (2.5,4) [draw] {};
\node[inner sep=2pt, circle] (h) at (3.75,2.25) [draw] {};
\node[inner sep=2pt, circle] (y) at (5.5,4) [draw] {};
\node[inner sep=2pt, circle] (t) at (5.5,1) [draw] {};
\node[inner sep=2pt, circle] (d) at (4.25,2.25) [draw] {};
\node[inner sep=2pt, circle] (s) at (2.5,1) [draw] {};
\node[inner sep=2pt, circle] (r) at (4,-.5) [draw] {};
\node[inner sep=2pt, circle] (z) at (7,2.5) [draw] {};
\node[inner sep=2pt, circle] (a) at (8.5,4) [draw] {};

\draw[-]  (c)--(s) node[midway, below] {};
\draw[-]  (s)--(h) node[midway, below] {};
\draw[-]  (s)--(r) node[midway, below] {};
\draw[-]  (t)--(r) node[midway, below] {};
\draw[-]  (t)--(z) node[midway, below] {};
\draw[-]  (t)--(d) node[midway, below] {};
\draw[-]  (z)--(y) node[midway, below] {};
\draw[-]  (c)--(b) node[midway, below] {};
\draw[-]  (c)--(p) node[midway, below] {};
\draw[-]  (z)--(a) node[midway, below] {};

\node[anchor = east]  at (b) {{$\ell_0$}};
\node[anchor = east]  at (c) {{$i_2$}};
\node[anchor = east]  at (h) {{$\ell_3$}};
\node[anchor = north]  at (s) {{$i_4$}};
\node[anchor = north]  at (r) {{$i_{10}$}};
\node[anchor = north]  at (t) {{$i_9$}};
\node[anchor = east]  at (p) {{$\ell_1$}};
\node[anchor = east]  at (y) {{$\ell_6$}};
\node[anchor = west]  at (d) {{$\ell_8$}};
\node[anchor = east]  at (z) {{$i_7$}};
\node[anchor = east]  at (a) {{$\ell_5$}};
\end{tikzpicture}

The above labelings of the nodes of $M$ are chosen with the homological sequence $B$ in mind.  Then the homological order consistent with $B$ is then seen to be
$$
\ell_0 \leq \ell_1 \leq i_2 \leq \ell_3 \leq i_4 \leq \ell_5\leq \ell_6 \leq i_7 \leq \ell_8 \leq i_9 \leq i_{10}
$$
where $\leq=\leq_B$.

We will construct a discrete Morse function $f\colon P_5\to \NN$ on the path $P_5$. The first step is to construct two vertices $v_0$ and $v_1$ corresponding to nodes $\ell_0$ and $\ell_1$, respectively. Define $f(v_0)=0$ and $f(v_1)=1$.

The given homological sequence satisfies $B(2)>B(1)$, so we add another vertex $v_2$ corresponding to $\ell_2$ and define $f(v_2)=2$.  Since $B(3)<B(2)$, we need to add an edge $e_3$ to the graph.  This edge corresponds to the first node in the inner node ordering $i_3$, and it will have endpoints $v_0$ and $v_1$. Define $f(e_3)=3$. Next we have $B(4)>B(3)$ so add a vertex $v_4$ corresponding to $\ell_5$ and define $f(v_4)=4$. Next $B(5)>B(4)$ so we add a vertex $v_5$ corresponding to the next node in the ordering $\leq_{\ell}$, namely, $\ell_5$.
We so far have
$$
\begin{tikzpicture}

\node[inner sep=2pt, circle] (a) at (-2,0) [draw] {};
\node[inner sep=2pt, circle] (b) at (-1,0) [draw] {};
\node[inner sep=2pt, circle] (c) at (0,0) [draw] {};
\node[inner sep=2pt, circle] (d) at (1,0) [draw] {};
\node[inner sep=2pt, circle] (e) at (2,0) [draw] {};
\node[inner sep=0pt, circle] (f) at (-1.5,0) [draw] {};

\draw[-]  (a)--(b) node[midway, below] {};

\node[anchor = north]  at (a) {{$0$}};
\node[anchor = north]  at (b) {{$1$}};
\node[anchor = north]  at (c) {{$2$}};
\node[anchor = north]  at (d) {{$4$}};
\node[anchor = north]  at (e) {{$5$}};
\node[anchor = south]  at (f) {{$3$}};

\end{tikzpicture}
$$
as our partial path.

We define $f(v_5)=5$. Now $B(6)<B(5)$ so we pick the next inner node $i_4$ in the ordering $\leq_i$ and construct edge $e_6$ with endpoints $v_1$ and $v_2$ along with $f(e_6)=6$. Since $B(7)<B(6)$, we pick inner node $i_7$ and construct edge $e_7$ with endpoints $v_4$ and $v_5$ and $f(e_7)=7$. Now $B(8)>B(7)$ so we add a vertex $v_8$ corresponding to node $\ell_8$ with label $f(v_8)=8$.

We now have
$$
\begin{tikzpicture}

\node[inner sep=2pt, circle] (a) at (-2,0) [draw] {};
\node[inner sep=2pt, circle] (b) at (-1,0) [draw] {};
\node[inner sep=2pt, circle] (c) at (0,0) [draw] {};
\node[inner sep=2pt, circle] (d) at (1,0) [draw] {};
\node[inner sep=2pt, circle] (e) at (2,0) [draw] {};
\node[inner sep=0pt, circle] (f) at (-1.5,0) [draw] {};
\node[inner sep=2pt, circle] (g) at (3,0) [draw] {};
\node[inner sep=0pt, circle] (h) at (-0.5,0) [draw] {};
\node[inner sep=0pt, circle] (i) at (1.5,0) [draw] {};

\draw[-]  (a)--(b) node[midway, below] {};
\draw[-]  (b)--(c) node[midway, below] {};
\draw[-]  (d)--(e) node[midway, below] {};

\node[anchor = north]  at (a) {{$0$}};
\node[anchor = north]  at (b) {{$1$}};
\node[anchor = north]  at (c) {{$2$}};
\node[anchor = north]  at (d) {{$4$}};
\node[anchor = north]  at (e) {{$5$}};
\node[anchor = south]  at (f) {{$3$}};
\node[anchor = north]  at (g) {{$8$}};
\node[anchor = south]  at (h) {{$6$}};
\node[anchor = south]  at (i) {{$7$}};

\end{tikzpicture}
$$
as our partial path.

We then have $B(9)<B(8)$ so we add edge $e_9$ corresponding to $i_9$ with endpoints $v_5$ and $v_8$ along with label $f(e_9)=9$.  Finally, $B(10)<B(9)$ and we construct edge $e_{10}$ corresponding to the root node $i_{10}$ with endpoints $v_2$ and $v_4$ with label $f(e_{10})=10.$

By construction, the discrete Morse function $f$ on this graph has the property that $M_f=M$ and $B_f=B.$
$$
\begin{tikzpicture}

\node[inner sep=2pt, circle] (a) at (-2,0) [draw] {};
\node[inner sep=2pt, circle] (b) at (-1,0) [draw] {};
\node[inner sep=2pt, circle] (c) at (0,0) [draw] {};
\node[inner sep=2pt, circle] (d) at (1,0) [draw] {};
\node[inner sep=2pt, circle] (e) at (2,0) [draw] {};
\node[inner sep=0pt, circle] (f) at (-1.5,0) [draw] {};
\node[inner sep=2pt, circle] (g) at (3,0) [draw] {};
\node[inner sep=0pt, circle] (h) at (-0.5,0) [draw] {};
\node[inner sep=0pt, circle] (i) at (1.5,0) [draw] {};
\node[inner sep=0pt, circle] (j) at (2.5,0) [draw] {};
\node[inner sep=0pt, circle] (k) at (0.5,0) [draw] {};

\draw[-]  (a)--(b) node[midway, below] {};
\draw[-]  (b)--(c) node[midway, below] {};
\draw[-]  (d)--(e) node[midway, below] {};
\draw[-]  (e)--(g) node[midway, below] {};
\draw[-]  (c)--(d) node[midway, below] {};

\node[anchor = north]  at (a) {{$0$}};
\node[anchor = north]  at (b) {{$1$}};
\node[anchor = north]  at (c) {{$2$}};
\node[anchor = north]  at (d) {{$4$}};
\node[anchor = north]  at (e) {{$5$}};
\node[anchor = south]  at (f) {{$3$}};
\node[anchor = north]  at (g) {{$8$}};
\node[anchor = south]  at (h) {{$6$}};
\node[anchor = south]  at (i) {{$7$}};
\node[anchor = south]  at (j) {{$9$}};
\node[anchor = south]  at (k) {{$10$}};

\end{tikzpicture}
$$
\end{example}

\begin{example}\label{ex: not all compatible} It is not the case that any homological sequence of length $m$ is compatible with any tree on $m$ nodes.  Indeed, consider the following merge tree

$$
\begin{tikzpicture}

\node[inner sep=2pt, circle] (0) at (0,0) [draw] {};
\node[inner sep=2pt, circle] (1) at (-1,1) [draw] {};
\node[inner sep=2pt, circle] (2) at (1,1) [draw] {};
\node[inner sep=2pt, circle] (3) at (2,2) [draw] {};
\node[inner sep=2pt, circle] (4) at (0.25,1.75) [draw] {};
\node[inner sep=2pt, circle] (5) at (-2,2) [draw] {};
\node[inner sep=2pt, circle] (6) at (-.25,1.75) [draw] {};

\draw[-]  (0)--(1) node[midway, below] {};
\draw[-]  (0)--(2) node[midway, below] {};
\draw[-]  (2)--(3) node[midway, below] {};
\draw[-]  (2)--(4) node[midway, below] {};
\draw[-]  (1)--(5) node[midway, below] {};
\draw[-]  (1)--(6) node[midway, below] {};

%\node[anchor = east]  at (b) {{$\ell_0$}};

\end{tikzpicture}
$$
along with homological sequence

$$
\begin{array}{ccccccc}
1 & 2 & 1 &2 &1 &2 &1  \\
\end{array}
$$

$$
\begin{tikzpicture}

\node[inner sep=2pt, circle] (0) at (0,0) [draw] {};
\node[inner sep=2pt, circle] (1) at (-1,1) [draw] {};
\node[inner sep=2pt, circle] (2) at (1,1) [draw] {};
\node[inner sep=2pt, circle] (3) at (2,2) [draw] {};
\node[inner sep=2pt, circle] (4) at (0.25,1.75) [draw] {};
\node[inner sep=2pt, circle] (5) at (-2,2) [draw] {};
\node[inner sep=2pt, circle] (6) at (-.25,1.75) [draw] {};

\draw[-]  (0)--(1) node[midway, below] {};
\draw[-]  (0)--(2) node[midway, below] {};
\draw[-]  (2)--(3) node[midway, below] {};
\draw[-]  (2)--(4) node[midway, below] {};
\draw[-]  (1)--(5) node[midway, below] {};
\draw[-]  (1)--(6) node[midway, below] {};

\node[anchor = north]  at (0) {{$6$}};
\node[anchor = north]  at (1) {{$2$}};
\node[anchor = north]  at (2) {{$5$}};
\node[anchor = north]  at (3) {{$3$}};
\node[anchor = north]  at (4) {{$4$}};
\node[anchor = north]  at (5) {{$0$}};
\node[anchor = north]  at (6) {{$1$}};

%\node[anchor = east]  at (b) {{$\ell_0$}};

\end{tikzpicture}
$$
\end{example}

\subsection{Open questions}

We share a few open questions that our work has raised.

\begin{enumerate}
    \item Example \ref{ex: final} raises the question of whether or not any sequence $B<J_0$ that is consistent with a given merge tree $M$ can be realized by a discrete Morse function. We conjecture in the negative.
    \item In \cite{RandScoville}, the authors show that given a gradient vector field on a tree and any homological sequence with the same number of critical simplices as the number of critical simplices on the gradient vector field, there is a discrete Morse function on the tree $T$ that induces both the given gradient vector field and homological sequence.  One may ask a similar question about the compatibility between a given gradient vector field and a given merge tree $M$.  This question seems to be much more delicate than the one taken up in this paper since the structure of the given tree $T$ greatly affects which merge trees can be induced.
    \item Relating to the previous point, given any fixed tree or graph $G$ in general, can one compute the collection of all merge trees that can be induced by a discrete Morse function on $G$?  We know that any merge tree may be realized as the image of some discrete Morse function on some path, but what does the set of all realizable merge trees for a fixed graph look like?
\end{enumerate}

\providecommand{\bysame}{\leavevmode\hbox to3em{\hrulefill}\thinspace}
\providecommand{\MR}{\relax\ifhmode\unskip\space\fi MR }
% \MRhref is called by the amsart/book/proc definition of \MR.
\providecommand{\MRhref}[2]{%
  \href{http://www.ams.org/mathscinet-getitem?mr=#1}{#2}
}
\providecommand{\href}[2]{#2}

\end{document}